\documentclass[11pt]{article}

\usepackage{cite}
\usepackage{subfigure}
\usepackage{graphicx, color, graphpap}
\usepackage[normalem]{ulem}
\usepackage{amsmath,amssymb,amsthm}
\usepackage{ifthen}
\usepackage{mathrsfs}
\usepackage{mathtools}
\usepackage{enumitem}
\usepackage{lineno}
\usepackage{float}
\usepackage{fancyhdr}
\usepackage[us,12hr]{datetime} 
\usepackage{exscale}
\usepackage{tabularx}
\usepackage{latexsym}
\usepackage{fullpage}       
\usepackage{float}
\usepackage{verbatim}
\usepackage{multirow}
\usepackage{overpic}
\usepackage{comment} 
\usepackage{hyperref}
\usepackage{blkarray}
\usepackage{adjustbox}
\usepackage{tcolorbox}

\usepackage{pgfplots}
\usepackage{marvosym}
\usepackage[noabbrev]{cleveref}
\usepackage{tikz}
\usepackage{pgfplots}

\usepackage{algorithm}
\usepackage{algpseudocode}

\pgfplotsset{compat=1.11}
\usetikzlibrary{arrows, arrows.meta}
\usetikzlibrary{shapes.geometric, arrows.meta, positioning}

\tikzstyle{startstop} = [rectangle, rounded corners, minimum width=3.5cm, minimum height=1cm,text centered, draw=black, fill=blue!10]
\tikzstyle{process} = [rectangle, minimum width=4cm, minimum height=1cm, text centered, draw=black, fill=green!10]
\tikzstyle{io} = [trapezium, trapezium left angle=70, trapezium right angle=110, minimum width=3.5cm, minimum height=1cm, text centered, draw=black, fill=orange!15]
\tikzstyle{arrow} = [thick,->,>=stealth]

\definecolor{methD}{RGB}{230,120,50}
\definecolor{methDD}{RGB}{50,140,90}
\definecolor{methSDD}{RGB}{50,90,170}
\definecolor{methPSD}{RGB}{180,40,40}
\tikzset{
	visbox/.style={rounded corners=8pt, line width=1.2pt, align=center, inner sep=8pt},
	visnestouter/.style={visbox, minimum width=9.6cm, minimum height=4.4cm},
	visnest/.style={visbox, minimum width=7.6cm, minimum height=3.25cm},
	visnestmid/.style={visbox, minimum width=5.7cm, minimum height=2.2cm},
	visnestinn/.style={visbox, minimum width=3.4cm, minimum height=1.05cm,
		inner sep=3pt, font=\scriptsize\bfseries, align=center, text=methD},
	visnestpos/.style={anchor=south east},
	visnestinset/.style={xshift=-5pt, yshift=5pt},
}

\newcommand{\Diag}{\text{Diag}}
\newcommand{\diag}{\text{diag}}
\DeclareMathOperator{\ri}{ri}

\newcommand{\supp}{\text{supp}}

\newcommand{\A}{\mathcal{A}}

\DeclareMathOperator{\sd}{sd}

\newcommand{\proj}{\text{proj}}

\newtheorem{lem}{Lemma}[section]
\newtheorem{thm}{Theorem}[section]

\crefname{thm}{Theorem}{Theorems}
\Crefname{thm}{Theorem}{Theorems}
\crefname{problem}{Problem}{Theorems}
\Crefname{problem}{Problem}{Theorems}
\Crefname{assump}{Assumption}{Theorems}
\crefname{assump}{Assumption}{Theorems}
\crefname{assumption}{Assumption}{Assumptions}
\Crefname{assumption}{Assumption}{Assumptions}
\crefname{conjecture}{Conjecture}{Theorems}
\Crefname{conjecture}{Conjecture}{Theorems}
\crefname{prop}{Proposition}{Propositions}
\Crefname{prop}{Proposition}{Propositions}
\crefname{cor}{Corollary}{Corollaries}
\Crefname{cor}{Corollary}{Corollaries}
\crefname{lem}{Lemma}{Lemmas}
\Crefname{lem}{Lemma}{Lemmas}
\theoremstyle{definition}
\crefname{conj}{Conjecture}{Conjectures}
\Crefname{conj}{Conjecture}{Conjectures}
\crefname{remark}{Remark}{Remarks}
\Crefname{remark}{Remark}{Remarks}
\crefname{rmk}{Remark}{Remarks}
\Crefname{rmk}{Remark}{Remarks}
\crefname{example}{Example}{Examples}
\Crefname{example}{Example}{Examples}
\crefname{align}{}{}
\Crefname{align}{}{}
\crefname{equation}{}{}
\Crefname{equation}{}{}

\def\eqref#1{{\normalfont(\ref{#1})}}

\usepackage{authblk}

\author[1]{Hao Hu\footnote{School of Mathematical and Statistical Sciences, Clemson University, Clemson, USA; Email: \url{hhu2@clemson.edu}; Research supported by the Air Force Office of Scientific Research under award number FA9550-23-1-0508.}}

\begin{document}

\title{On the Singularity Degree of Shor Relaxations for \(0\)–\(1\) Programs}

\break
\date{\today}
\maketitle

\medskip


\begin{abstract}
Shor relaxations are standard tools for nonconvex quadratic optimization, but
they may fail Slater's condition.  The
singularity degree quantifies this degeneracy by the number of facial-reduction
steps needed to reach the minimal face.  We determine how the constraints
defining a binary feasible set affect this quantity.  For every
nonempty binary set defined by linear equations and inequalities, the
Shor relaxation has singularity degree at most one.  It is zero precisely when
the linear programming (LP) relaxation contains a point in the open cube that
strictly satisfies every inequality, and is one otherwise.  As an algorithmic
consequence of this characterization, Slater's condition can be restored by
solving an LP rather than an auxiliary semidefinite program (SDP).  In
sharp contrast, for every $n\geq1$, we construct a binary feasible set defined
by quadratic equalities whose Shor relaxation has a positive semidefinite
matrix variable of order $n+1$ and singularity degree $n$.  Thus, singularity
degree is uniformly bounded in the linearly constrained case but can grow
linearly with the matrix order when quadratic defining constraints are used.
\end{abstract}
{\bf Key Words:}
semidefinite programming, Shor relaxation, facial reduction, singularity
degree, binary quadratic optimization

{\bf Mathematics Subject Classification:}
90C22, 90C09, 90C20

\section{Introduction}

The Shor relaxation is a standard convexification for binary
quadratic optimization.  Its facial geometry depends on the structure retained
by the lifting step.  The exact rank-one lift of $x\in\{0,1\}^n$ is
$Y:=\binom{1}{x}\binom{1}{x}^T$, whose lower-right block is $xx^T$.
Binarity gives $(xx^T)_{ii}=x_i$.  The Shor relaxation drops the rank-one
condition by replacing $xx^T$ with a symmetric matrix variable
$X\in\mathbb{S}^n$ and
imposing $Y\succeq0$, while retaining the normalization constraint $Y_{00}=1$
and the arrow constraints $X_{ii}=x_i$, $i=1,\ldots,n$.  Constraints from the
original formulation are imposed in addition: linear constraints remain
linear, while quadratic constraints are linearized as affine constraints in
the lifted variables.  This gives the standard SDP relaxation for quadratic
binary optimization
\cite{shor1987quadratic,poljak1995recipe,rendl2010integer}.  How the defining
constraints enter this lifted system is central to our results.

A central regularity question for an SDP is whether its feasible set satisfies
Slater's condition.  When strict feasibility fails, the feasible set lies in a
proper face of the ambient cone.  Facial reduction replaces that cone by a
smaller face containing the feasible set
\cite{borwein1981facial,waki2013facial,pataki2013strong}.  Partial
polyhedrality yields refined bounds on the number of facial-reduction steps
\cite{lourencoMuramatsuTsuchiya2018partial}.  The minimum number of
reductions needed to reach the minimal face is the singularity degree.  This
quantity governs error bounds for semidefinite systems
\cite{sturm2000error,sremac2021error} and is also linked to exponential
solution size in SDP \cite{patakiTouzov2024exponential}.  For the Shor
relaxation studied here, Slater's condition and singularity degree depend only
on the feasible conic system and are independent of the objective function.
Accordingly, our results are stated in terms of feasible systems and the
constraints defining the underlying binary sets.  Throughout the paper,
``Shor relaxation'' refers to this primal semidefinite feasible system.

Our first result gives a complete characterization for linearly constrained
binary sets.  For every nonempty binary set defined by linear equations and
inequalities, its Shor relaxation has singularity degree zero precisely
when the linear programming relaxation contains a point in the open cube that
strictly satisfies every inequality; otherwise, its singularity degree is one.
Thus, the singularity degree is always at most one, and this bound is attained.
The proof combines the geometry of the arrow-constrained spectrahedron
\cite{gouveia2010theta,laurent2005semidefinite} with the strict-complementarity
theorem of Goldman and Tucker
\cite{goldman1956theory}: the former yields the Slater criterion, while the
latter produces the certificate used to expose the minimal face in one step.
A relative-interior point of the associated polyhedral slice, used to verify
that the exposed face is minimal, can also be found by linear programming.
This is standard for polyhedra; see
\cite[Corollary~2A]{goldman1956theory} and
\cite[Exercise~3.27]{bertsimas1997introduction}.  Thus,
Slater's condition can be restored without solving an auxiliary SDP.

Our second result shows that the restriction to linear constraints is
essential.  For every $n\geq1$, we construct a binary feasible set defined by
quadratic equalities whose Shor relaxation has a positive semidefinite matrix
variable of order $n+1$ and singularity degree $n$.  Thus, the singularity
degree can grow linearly with the matrix order even in the presence of the
arrow constraints.  The construction is formulation-sensitive: the underlying
binary set is the singleton $\{0\}$, but the quadratic formulation produces
high singularity degree, whereas the linear formulation $x=0$ has singularity
degree one.  The proof rotates Sturm's example
\cite[Example~2]{sturm2000error} using a Vandermonde transformation
so that each facial-reduction step removes exactly one coordinate.

Together, the two results identify a sharp boundary for facial degeneracy in
Shor relaxations.  With linear defining constraints, the Shor relaxation
is either strictly feasible or reaches its minimal face in one
facial-reduction step.  Once quadratic defining constraints are allowed, the
number of required steps can instead grow linearly with the order of the
lifted matrix.  Thus, the one-step behavior is specific to the linearly
constrained case and does not follow from the arrow constraints alone.

\paragraph{Relation to previous work.}
Tun{\c{c}}el \cite{tuncel2001slater} related dimension and
affine-hull information for a nonconvex feasible set to the construction of
Slater points for a broad class of SDP relaxations.  Hu
and Li \cite[Rem.~3.2]{hu2023facial} developed LP-based partial facial
reduction for Shor relaxations of quadratically constrained quadratic
programs.  Hu and Yang \cite[Lemma~4.4]{hu2024affine} studied a partial
facial-reduction procedure based on the diagonally dominant cone that
terminates after at most one step, and also proposed affine FR.  Hu and Xu
\cite{hu2025primal} proposed a primal facial-reduction method that uses
feasible points of the underlying combinatorial problem to simplify the
reduction steps.  Our results complement these approaches by identifying the
minimal face containing the entire Shor relaxation and determining its
standard singularity degree exactly.

\paragraph{Organization.}
\Cref{sec_prel} reviews the first-level binary lift, facial reduction, and the
result of Goldman and Tucker used later.
\Cref{sec:linear_binary_shor_degree}
proves the exact singularity-degree dichotomy between zero and one for linear
constraints, characterizes the minimal face, and develops an LP-based procedure
for restoring Slater's condition.  \Cref{sec:binary_high_singularity_degree}
explains why a direct binary transcription of Sturm's example has singularity
degree one and presents a quadratic construction attaining singularity degree
$n$.

\section{Preliminaries}\label{sec_prel}

\subsection{Notation}

For a positive integer $k$, let $\mathbb{R}^k_+$ and
$\mathbb{R}^k_{++}$ denote the nonnegative and strictly positive orthants,
respectively.  We write $\mathbb{S}^k$ for the space of real symmetric
$k\times k$ matrices, equipped with the trace inner product
\[
\langle U,V\rangle:=\operatorname{trace}(UV).
\]
The positive semidefinite (PSD) and positive-definite cones are denoted by
$\mathbb{S}^k_+$ and $\mathbb{S}^k_{++}$; we write $U\succeq0$ and
$U\succ0$ for membership in these cones.  For $U\in\mathbb{S}^k$,
$\diag(U)$ is its diagonal vector, and for $u\in\mathbb{R}^k$, $\Diag(u)$ is
the diagonal matrix with diagonal $u$.  The all-ones vector is denoted by
$\mathbf{1}$, with its dimension determined by context.  For a convex set
$\mathcal C$, $\ri(\mathcal C)$ denotes its relative interior.  For an
element $u$ of an inner-product space, we write
$u^\perp:=\{v\mid\langle u,v\rangle=0\}$; for a vector $u$, $\supp(u)$ is the
set of indices of its nonzero components.

For each $k\geq0$, we index the coordinates of $\mathbb{R}^{k+1}$, and hence
the rows and columns of matrices in $\mathbb{S}^{k+1}$, by
$\{0,1,\ldots,k\}$.  Let $e_0,\ldots,e_k$ denote the corresponding standard
unit vectors.  We use the symmetrized matrix units
\begin{equation}\label{eq:symmetric_basis_matrices}
E_{ii}:=e_ie_i^T,
\qquad
E_{ij}:=\frac12(e_ie_j^T+e_je_i^T)\quad(i\neq j),
\end{equation}
so that $\langle E_{ij},Y\rangle=Y_{ij}$ for $Y\in\mathbb{S}^{k+1}$.
The value of $k$, and hence the dimensions of $e_i$ and $E_{ij}$, will always
be clear from context.

\subsection{Shor relaxations for binary programs}
\label{subsec:shor_relaxations_binary_programs}
Let $A\in\mathbb{R}^{p\times n}$, $C\in\mathbb{R}^{q\times n}$,
$b\in\mathbb{R}^p$, and $d\in\mathbb{R}^q$.  Let $m$ be a nonnegative
integer.  For $j=1,\ldots,m$, let $Q_j\in\mathbb{S}^n$,
$q_j\in\mathbb{R}^n$, and $r_j\in\mathbb{R}$.
Consider the nonempty binary feasible set
\[
\mathcal X
:=
\left\{
x\in\{0,1\}^n
\ \middle|\
\begin{aligned}
Ax&=b, & Cx&\leq d,\\
x^TQ_jx+2q_j^Tx&=r_j &&(j=1,\ldots,m)
\end{aligned}
\right\}.
\]

The terminology used below refers to the displayed formulation.  The binary
domain $x\in\{0,1\}^n$ already encodes the identities $x_i^2=x_i$; we do not
count these identities as quadratic defining constraints.  We call the set
\emph{linearly constrained} when its formulation contains only the linear
constraints $Ax=b$ and $Cx\leq d$, corresponding to $m=0$.  We call it
\emph{quadratically constrained} when the formulation includes additional
explicit quadratic constraints, such as those indexed by
$j=1,\ldots,m$ above.  Thus, this distinction concerns the chosen
formulation, not polynomial identities that hold automatically on the binary
domain.

To construct the Shor relaxation \cite{shor1987quadratic} for $\mathcal X$,
introduce the lifted matrix variable $Y\in\mathbb{S}^{n+1}$ and write it in
block form as
$Y=\left(\begin{smallmatrix}Y_{00}&x^T\\x&X\end{smallmatrix}\right)$, where
$x\in\mathbb{R}^n$ and $X\in\mathbb{S}^n$.  In the relaxation below, $x$
and $X$ denote the blocks of $Y$ in this decomposition.
The rows and columns of $Y$ are indexed by $\{0,1,\ldots,n\}$, with index $0$
corresponding to the constant term.  The exact lift satisfies
$Y=\binom{1}{x}\binom{1}{x}^T$ and hence $X=xx^T$.  Replacing this rank-one
representation by positive semidefiniteness and linearizing the quadratic
constraints gives
\[
\mathcal S_{\mathrm{Shor}}
:=
\left\{
Y\in\mathbb{S}^{n+1}_+
\ \middle|\
\begin{aligned}
Y_{00}&=1, & \diag(X)&=x,\\
Ax&=b, & Cx&\leq d,\\
\langle Q_j,X\rangle+2q_j^Tx&=r_j &&(j=1,\ldots,m)
\end{aligned}
\right\}.
\]
Thus, the original linear constraints are retained, while each quadratic form
is linearized by replacing $xx^T$ with $X$.  Quadratic inequalities can be
included similarly by imposing their linearizations.  We omit them because
the first result concerns the case without quadratic constraints and the
high-degree construction requires only quadratic equalities.  The displayed
constraint description is part of the formulation: different descriptions of
the same binary set may produce different Shor relaxations.

The condition $Y_{00}=1$ is the normalization constraint, and the
\emph{arrow constraints} are $\diag(X)=x$.  Equivalently, in full-matrix
coordinates, the arrow constraints are
\begin{equation}\label{eq:binary_arrow_constraints}
Y_{ii}=Y_{0i},\qquad i=1,\ldots,n.
\end{equation}
The equality $X_{ii}=x_i$, equivalently $Y_{ii}=Y_{0i}$, represents the
binary identity $x_i^2=x_i$.
With the convention \eqref{eq:symmetric_basis_matrices}, we write the
normalization and arrow constraints as
\begin{equation}\label{eq:arrow_matrix_defs}
\langle A_0,Y\rangle=1,\qquad
\langle A_i,Y\rangle=0\quad (i=1,\ldots,n),
\qquad
A_0 := E_{00}, \quad A_i := E_{ii}-E_{0i}.
\end{equation}
Throughout the paper, binary feasible sets are assumed to be nonempty unless
explicitly stated otherwise.

\subsection{Facial reduction and singularity degree}
\label{subsec:facial_reduction_algorithm}
Facial reduction was introduced by Borwein and Wolkowicz
\cite{borwein1981facial}; see Drusvyatskiy and Wolkowicz
\cite{drusvyatskiy2017many} for a broader treatment.  We briefly recall the
framework used below.
Consider the nonempty conic feasible set
\[
\mathcal F:=\{z\in K\mid \A(z)=b\},
\]
where $\A$ is linear, $\A^*$ denotes its adjoint, and $K$ is a closed,
full-dimensional, self-dual cone.
The cones used below, namely PSD cones, nonnegative orthants, and their
products, have these properties.
Slater's condition holds when $\mathcal F\cap\ri(K)\neq\emptyset$.

If Slater's condition fails, a facial-reduction step finds a multiplier $y$
such that
\[
w:=\A^*(y)\in K\setminus\{0\},
\qquad b^Ty=0.
\]
For every $z\in\mathcal F$,
$\langle w,z\rangle=\langle y,\A(z)\rangle=b^Ty=0$.
Thus, $w$ exposes the proper face $K\cap w^\perp$ containing $\mathcal F$.
If this is not yet the minimal face containing $\mathcal F$, facial reduction
is repeated within the current face.  The smallest number of steps needed to
reach the minimal face is the \emph{singularity degree}, denoted by
$\sd(\mathcal F)$; it is zero exactly when Slater's condition holds.

Specializing to an SDP, the first-step facial-reduction auxiliary system for
$\A(X)=b$, $X\succeq0$, is therefore
\begin{equation}\label{eq:first_fr_auxiliary}
\A^{*}(y)\succeq0,\qquad \A^{*}(y)\neq0, \qquad b^Ty=0.
\end{equation}
If a face $F$ contains $\mathcal F$, then finding a point in
$\mathcal F\cap\ri(F)$ shows that $F$ is the minimal face containing
$\mathcal F$.

Singularity degree can be arbitrarily large in SDPs.  A reindexed form of a
classical example due to Sturm \cite[Example~2]{sturm2000error} (see also
Tun{\c{c}}el \cite[Sec.~2.6]{tuncel2010polyhedral}) is
\begin{equation}\label{eq:sturm_example}
\left\{
Z\in\mathbb{S}^{n+1}_+
\;\middle|\;
Z_{00}=1,\quad
Z_{nn}=0,\quad
Z_{ii}-Z_{0,i+1}=0,\ i=1,\ldots,n-1
\right\}.
\end{equation}
This system has singularity degree $n$: facial reduction forces the diagonal
entries $Z_{nn},\allowbreak Z_{n-1,n-1},\ldots,\allowbreak Z_{11}$ to vanish
sequentially.

We also use the standard description of PSD faces: every face of
$\mathbb{S}^n_+$ can be written, for some full-column-rank matrix
$V\in\mathbb{R}^{n\times r}$, as
\[
F_V:=\{VRV^T\mid R\succeq0\},
\qquad
\ri(F_V)=\{VRV^T\mid R\succ0\}.
\]

Facial reduction for linear programming terminates after at most one step.
Indeed, for a nonempty linear programming feasible system, the singularity
degree relative to its polyhedral cone is zero when Slater's condition holds
and one otherwise.  We use the following coordinate form of this fact for the
nonnegative orthant.  It is a standard consequence of the
strict-complementarity theorem of Goldman and Tucker
\cite{goldman1956theory}.

\begin{lem}
\label{lem:nonnegative_orthant_minimal_face}
Let $M\in\mathbb{R}^{m\times N}$ and $g\in\mathbb{R}^m$, and define
\[
\begin{aligned}
\mathcal U&:=\{z\in\mathbb{R}^N_+\mid Mz=g\},\\
\mathcal Z&:=\{j\in\{1,\ldots,N\}\mid
z_j=0\ \text{for every }z\in\mathcal U\}.
\end{aligned}
\]
Suppose that $\mathcal U$ is nonempty.  Then there exists
$y\in\mathbb{R}^m$ such that
\[
h:=M^Ty\geq0,\qquad g^Ty=0,\qquad \supp(h)=\mathcal Z.
\]
Thus, when $\mathcal Z\neq\emptyset$, $h$ exposes the minimal face of
$\mathbb{R}^N_+$ containing $\mathcal U$; when $\mathcal Z=\emptyset$,
$\mathcal U\cap\mathbb{R}^N_{++}\neq\emptyset$.
\end{lem}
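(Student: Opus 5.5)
We derive this from the Goldman--Tucker strict-complementarity theorem applied to the pair of linear programs $\max\{\mathbf 1_{\mathcal Z}^T z : z\in\mathcal U\}$ and its dual. Here $\mathbf 1_{\mathcal Z}\in\mathbb{R}^N$ denotes the indicator vector of $\mathcal Z$. By definition of $\mathcal Z$, every feasible $z$ has $\mathbf 1_{\mathcal Z}^Tz=0$. So the primal LP is feasible, since $\mathcal U\neq\emptyset$, and has optimal value $0$, with every point of $\mathcal U$ optimal. By LP duality the dual $\min\{g^Ty : M^Ty\geq \mathbf 1_{\mathcal Z}\}$ is feasible with optimal value $0$. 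Goldman--Tucker then gives an optimal primal $z^*$ and an optimal dual $y^*$ that are strictly complementary: for each $j$, exactly one of $z^*_j>0$ and $(M^Ty^*-\mathbf 1_{\mathcal Z})_j>0$ holds.

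The key consequence is a clean split of the coordinates. For $j\in\mathcal Z$, $z^*_j=0$, so the slack is positive and $(M^Ty^*)_j>1$. For $j\notin\mathcal Z$, some feasible point has a positive $j$-th coordinate. Since all feasible points are optimal, averaging them gives an optimal point positive on every $j\notin\mathcal Z$. Complementary slackness against such a point forces $(M^Ty^*)_j=0$ there. Thus $h:=M^Ty^*$ satisfies $h\geq0$ and $\supp(h)=\mathcal Z$, and $g^Ty^*=0$ because the dual optimal value is $0$. This sign bookkeeping, which uses the all-optimal structure in place of the bare Goldman--Tucker partition, is the only delicate step. A Farkas/Stiemke argument applied to each index $j\in\mathcal Z$ separately, followed by summing the resulting multipliers, would be an equally valid alternative.

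For the final assertions, first take $\mathcal Z\neq\emptyset$. The vector $h$ lies in $\mathbb{R}^N_+\setminus\{0\}$, and for $z\in\mathcal U$ we have $h^Tz=y^{*T}Mz=g^Ty^*=0$. Hence $\mathcal U$ lies in the face $F=\{z\geq0: z_j=0\ \forall j\in\mathcal Z\}$ exposed by $h$. The averaged point has $z_j>0$ for all $j\notin\mathcal Z$, so it lies in $\ri(F)$, and $F$ is therefore minimal. If instead $\mathcal Z=\emptyset$, the same averaging construction directly yields a point of $\mathcal U\cap\mathbb{R}^N_{++}$.
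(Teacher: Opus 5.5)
Your proof is correct and follows the route the paper indicates. The paper gives no argument of its own and only cites the lemma as a standard consequence of the Goldman--Tucker theorem; your LP pair with objective $\mathbf 1_{\mathcal Z}$ makes that derivation explicit. With this objective, strict complementarity is in fact never used: dual feasibility $M^Ty^*\geq\mathbf 1_{\mathcal Z}$ already gives $h_j\geq1$ on $\mathcal Z$, and complementary slackness against your averaged maximal-support point gives $h_j=0$ off $\mathcal Z$, so plain LP strong duality suffices. (With the more common zero objective, it is the Goldman--Tucker partition that pins $\supp(h)$ down to exactly $\mathcal Z$.)
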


We will also use the fact that a relative-interior point of a polyhedral set
can be found by solving a linear program; see
\cite{mehdiloo2021finding} for an explicit construction.

\section{The Shor relaxation of a linearly constrained binary set}
\label{sec:linear_binary_shor_degree}

We show that the Shor relaxation of every nonempty linearly constrained binary
set has singularity degree at most one.
Fix matrices $A\in\mathbb{R}^{p\times n}$ and
$C\in\mathbb{R}^{q\times n}$, and vectors $b\in\mathbb{R}^p$ and
$d\in\mathbb{R}^q$.  Consider the binary feasible set
\[
\mathcal X:=\{x\in\{0,1\}^n\mid Ax=b,\ Cx\leq d\},
\]
which we assume is nonempty.  Its linear programming (LP) relaxation is
\begin{equation}\label{eq:shor_lp_relaxation}
P
:=
\{x\in[0,1]^n\mid Ax=b,\ Cx\leq d\}.
\end{equation}

The Shor relaxation retains the original linear constraints and imposes the
normalization constraint, the arrow constraints, and the PSD constraint.
After introducing a nonnegative slack vector
$s\in\mathbb{R}^q_+$ for $Cx\leq d$, the variables $(Y,s)$ lie in the
product cone $\mathbb{S}^{n+1}_+\times\mathbb{R}^q_+$.  Using the block
convention $Y=\left(\begin{smallmatrix}1&x^T\\x&X\end{smallmatrix}\right)$,
the feasible set is
\begin{equation}\label{eq:shor_linear_binary_sdp}
\begin{aligned}
S
:=\bigl\{(Y,s)\in\mathbb{S}^{n+1}_+\times\mathbb{R}^q_+
\ \big|\ {}
&Y_{00}=1,\\
&\diag(X)=x,\\
&Ax=b,\\
&Cx+s=d
\bigr\}.
\end{aligned}
\end{equation}
For completeness, a quadratic objective $x^THx+c^Tx$, where
$H\in\mathbb{S}^n$ and $c\in\mathbb{R}^n$, is replaced in the Shor relaxation
by $\langle H,X\rangle+c^Tx$.  This does not change the feasible
set $S$, and all facial-reduction statements below concern $S$ alone.

For later reference, we state the first-step facial-reduction auxiliary system
for \eqref{eq:shor_linear_binary_sdp} explicitly.  Let
$\lambda\in\mathbb{R}$, $r\in\mathbb{R}^n$, $\mu\in\mathbb{R}^p$, and
$\nu\in\mathbb{R}^q$ be multipliers for the normalization, the arrow
constraints, $Ax=b$, and $Cx+s=d$, respectively.  Since
$\mathbb{S}^{n+1}_+\times\mathbb{R}^q_+$ is self-dual, the system is
\begin{equation}\label{eq:shor_fr_auxiliary}
\begin{aligned}
W
&:=\lambda E_{00}
+\sum_{i=1}^n r_i(E_{ii}-E_{0i})
+\sum_{i=1}^n(A^T\mu+C^T\nu)_iE_{0i}\succeq0,\\
&\nu\geq0,\qquad (W,\nu)\neq(0,0),\\
&\lambda+b^T\mu+d^T\nu=0.
\end{aligned}
\end{equation}
For every $x\in\mathcal X$, the rank-one matrix
\[
\begin{pmatrix}1\\x\end{pmatrix}
\begin{pmatrix}1\\x\end{pmatrix}^T
\]
together with $s=d-Cx$ is feasible in
\eqref{eq:shor_linear_binary_sdp}.  Hence
$S$ is an SDP relaxation of $\mathcal X$.
Here ``Shor relaxation'' refers exactly to
\eqref{eq:shor_linear_binary_sdp}.  No strengthening constraints on $Y$,
such as RLT constraints or linearizations of additional valid quadratic
constraints, are imposed.  We view each scalar slack as a $1\times1$ PSD block
and compute singularity degree with respect to this product cone.

The Shor relaxation \eqref{eq:shor_linear_binary_sdp} is standard; see
\cite{poljak1995recipe}, \cite[Sec.~3.1, Eq.~(9)]{rendl2010integer}, and
\cite[Sec.~2.2]{galli2014compact}.  The unit-cube projection used below is also
standard; see \cite[Sec.~3.1]{gouveia2010theta} and
\cite[Sec.~4.2, Eq.~(63) and Lem.~10]{laurent2005semidefinite}.
For later use, define
\[
B_n
:=
\left\{
Y=
\begin{pmatrix}1&x^T\\x&X\end{pmatrix}
\in\mathbb{S}^{n+1}_+
\ \middle|\
\diag(X)=x
\right\}.
\]
Here $\proj_x$ denotes projection onto the $x$-block.  The standard
projection identity is
\begin{equation}\label{eq:arrow_projection_cube}
\proj_x(B_n)=[0,1]^n.
\end{equation}
An explicit lift is obtained by setting
\[
D(x):=\Diag\bigl(x_1(1-x_1),\ldots,x_n(1-x_n)\bigr)
\]
and
\begin{equation}\label{eq:shor_explicit_lift}
Y(x)
:=
\begin{pmatrix}
1&x^T\\
x&xx^T+D(x)
\end{pmatrix}.
\end{equation}
For every $x\in[0,1]^n$, the lower-right block has diagonal $x$ and the
Schur complement of the $(0,0)$ entry is $D(x)\succeq0$, so $Y(x)\in B_n$.
The additional point needed for our facial analysis is to determine which
points in this projection admit a positive-definite lift.

\begin{lem}[Positive-definite lifts]
\label{lem:arrow_positive_definite_lift}
For a given $x\in[0,1]^n$, there exists
$X\in\mathbb{S}^n$ such that
$\left(\begin{smallmatrix}1&x^T\\x&X\end{smallmatrix}\right)\in
B_n\cap\mathbb{S}^{n+1}_{++}$ if and only if $0<x<\mathbf{1}$.
\end{lem}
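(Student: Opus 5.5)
We prove the two directions separately. For sufficiency, the explicit lift \eqref{eq:shor_explicit_lift} already does the job. For necessity, we test the positive-definite matrix against a suitable vector to obtain strict inequalities.

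\emph{Sufficiency.} Suppose $0<x<\mathbf{1}$, and take $Y(x)$ from \eqref{eq:shor_explicit_lift}. We have already observed that $Y(x)\in B_n$. Its $(0,0)$ entry is $1>0$, and the Schur complement of that entry is $D(x)=\Diag\bigl(x_1(1-x_1),\ldots,x_n(1-x_n)\bigr)$. Every diagonal entry $x_i(1-x_i)$ is strictly positive because $0<x_i<1$, so $D(x)\succ0$. By the Schur complement criterion for positive definiteness (leading block positive and Schur complement positive definite), $Y(x)\succ0$. Hence $Y(x)\in B_n\cap\mathbb{S}^{n+1}_{++}$.

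\emph{Necessity.} Suppose $Y=\left(\begin{smallmatrix}1&x^T\\x&X\end{smallmatrix}\right)\in B_n$ is positive definite, and fix an index $i\in\{1,\ldots,n\}$.

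1. Consider the principal $2\times2$ submatrix of $Y$ on the rows and columns $\{0,i\}$. The arrow constraint $X_{ii}=x_i$ gives
\[
\begin{pmatrix}1&x_i\\x_i&x_i\end{pmatrix},
\]
which is positive definite because $Y$ is.

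2. Its $(i,i)$ diagonal entry must be positive, so $x_i>0$.

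3. Its determinant must be positive, and it equals $x_i-x_i^2=x_i(1-x_i)$. Combined with $x_i>0$, this gives $x_i<1$.

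Equivalently, step 3 follows from evaluating the quadratic form of $Y$ at $v=e_0-e_i$, which gives $v^TYv=1-2x_i+x_i=1-x_i>0$.

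Since $i$ was arbitrary, $0<x<\mathbf{1}$. This proves the lemma.
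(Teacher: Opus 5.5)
Your proof is correct and is essentially the paper's argument: necessity comes from positive definiteness of the $2\times2$ principal submatrix on indices $\{0,i\}$, which the arrow constraint turns into $\left(\begin{smallmatrix}1&x_i\\x_i&x_i\end{smallmatrix}\right)$, and sufficiency comes from the explicit lift $Y(x)$, whose Schur complement $D(x)$ is positive definite on the open cube. Your extra check with $v=e_0-e_i$ is an equivalent way of stating the determinant step.
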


\begin{proof}
If $Y=\left(\begin{smallmatrix}1&x^T\\x&X\end{smallmatrix}\right)\in
B_n\cap\mathbb{S}^{n+1}_{++}$, then each
principal submatrix
\[
\begin{pmatrix}
1&x_i\\
x_i&x_i
\end{pmatrix}
\]
is positive definite, and hence $0<x_i<1$.

Conversely, suppose that $0<x<\mathbf{1}$.  Then $D(x)\succ0$, so the Schur
complement of the $(0,0)$ entry in \eqref{eq:shor_explicit_lift} is
positive definite.  Hence $Y(x)\succ0$.
\end{proof}

It follows immediately from \eqref{eq:arrow_projection_cube} that
\begin{equation}\label{eq:shor_projection_P}
\proj_x(S)=P.
\end{equation}
Indeed, every $x\in P$ admits the lift $Y(x)$ in
\eqref{eq:shor_explicit_lift}, with slack $s=d-Cx$, while every feasible
$(Y,s)$ projects into $P$ by the defining constraints of $S$.  Consequently,
the Shor relaxation and the LP relaxation have the same optimal value
for every linear objective in $x$.

We can now characterize Slater's condition and the singularity degree of $S$.
\begin{thm}\label{thm:shor_linear_sd_bound}
The Shor relaxation \eqref{eq:shor_linear_binary_sdp} has
singularity degree at most one.  More precisely,
\[
\sd(S)
=
\begin{cases}
0,
&\text{if there exists }x\in P\text{ such that }
0<x<\mathbf{1}\text{ and }Cx<d,\\
1,
&\text{otherwise}.
\end{cases}
\]
\end{thm}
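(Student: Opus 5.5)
The proof splits into the Slater characterization, which uses \Cref{lem:arrow_positive_definite_lift}, and a one-step facial reduction for the remaining case, which uses \Cref{lem:nonnegative_orthant_minimal_face}.

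\textbf{Step 1: the Slater case.} A point of $\ri(\mathbb{S}^{n+1}_+\times\mathbb{R}^q_+)$ feasible for $S$ is a pair $(Y,s)$ with $Y\succ0$ and $s>0$. Its $x$-block lies in $P$ by \eqref{eq:shor_projection_P}. By \Cref{lem:arrow_positive_definite_lift}, $Y\succ0$ forces $0<x<\mathbf{1}$, and $s=d-Cx>0$ gives $Cx<d$. Conversely, take $x\in P$ with $0<x<\mathbf{1}$ and $Cx<d$. Then the lift $Y(x)$ of \eqref{eq:shor_explicit_lift} together with $s=d-Cx$ is a Slater point. So $\sd(S)=0$ exactly in the stated case.

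\textbf{Step 2: setting up the LP in the otherwise case.} Work on the polyhedral slice. Write $P$ in orthant form by introducing nonnegative variables
\[
z=(x,\;\mathbf{1}-x,\;s)\in\mathbb{R}^{2n+q}_+ .
\]
These satisfy the linear equations $Ax=b$, $x+(\mathbf{1}-x)=\mathbf{1}$ and $Cx+s=d$. Call the resulting set $\mathcal U$; it is nonempty. Let $\mathcal Z$ be its set of identically-zero coordinates, which is nonempty in this case. \Cref{lem:nonnegative_orthant_minimal_face} gives multipliers $(\mu,\rho,\nu)$ with $h=M^Ty\ge0$, $g^Ty=0$ and $\supp(h)=\mathcal Z$.

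\textbf{Step 3: building the exposing pair.} Translate $h$ into a feasible solution of \eqref{eq:shor_fr_auxiliary}. For each index $i$ with $x_i\equiv0$ on $P$, use the $2\times2$ PSD matrix
\[
\alpha(E_{ii}-E_{0i})+\alpha' E_{00}\succeq0,
\]
or more directly the arrow multiplier $r_i>0$ together with a linear coefficient. Over $S$ the value $Y_{ii}=x_i$ must vanish, so the matrix should be a multiple of $E_{ii}$. For each $i$ with $x_i\equiv1$ on $P$, we want a multiple of $(e_0-e_i)(e_0-e_i)^T$, since $Y_{00}-2Y_{0i}+Y_{ii}=1-x_i$ by the arrow constraint. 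For each $j$ with $s_j\equiv0$, the component $\nu_j>0$ lives on the slack cone.

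Concretely, the aim is
\[
W=\sum_{i:x_i\equiv0}h_iE_{ii}+\sum_{i:x_i\equiv1}h_{n+i}(e_0-e_i)(e_0-e_i)^T,
\]
which is PSD, paired with $\nu$. One must check that this $W$ has the form $\lambda E_{00}+\sum r_i(E_{ii}-E_{0i})+\sum (A^T\mu+C^T\nu)_iE_{0i}$ with $\lambda+b^T\mu+d^T\nu=0$. The plan is to choose $r_i$ to absorb the diagonal $E_{ii}$ terms, so the $E_{0i}$ coefficients must match the linear functional $h$ restricted to $x$. Equivalently, $\langle W,Y\rangle+\nu^Ts$ should equal the affine function $h^Tz$, which vanishes on $S$. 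I expect this bookkeeping, namely expressing $\sum_{\mathcal Z}h_jz_j$ as a lifted linear combination of the constraints of $S$ whose matrix part is PSD, to be the main technical point. The identities $x_i=Y_{ii}$ and $1-x_i=\langle (e_0-e_i)(e_0-e_i)^T,Y\rangle$ on $S$ are exactly what make it work.

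\textbf{Step 4: minimality of the exposed face.} The exposed face is
\[
F=\{Y\succeq0:\ Y_{ii}=0\ (x_i\equiv0),\ Y(e_0-e_i)=0\ (x_i\equiv1)\}\times\{s:\ s_j=0\ (j\in\mathcal Z_s)\}.
\]
This is $F_V$ times an orthant face, where $V$ spans the complement of $\{e_i\}\cup\{e_0-e_i\}$. To show $F$ is minimal, take $x\in\ri(P)$ (obtainable by LP). On the free coordinates $0<x_i<1$ and the free slacks are positive. Then $Y(x)$ lies in $\ri(F)$: after the congruence that substitutes $e_0-e_i$, its restriction to the free coordinates is $\begin{pmatrix}1&\bar x^T\\ \bar x&\bar x\bar x^T+D(\bar x)\end{pmatrix}\succ0$. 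Hence one step suffices, and since Slater fails, $\sd(S)=1$.
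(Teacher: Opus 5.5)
Your proposal is correct and follows the paper's proof essentially step for step. It uses the same Slater argument via \Cref{lem:arrow_positive_definite_lift}, the same Goldman--Tucker slice $\mathcal U$, and the same exposing pair: $E_{ii}$ on the coordinates fixed at zero, $(e_0-e_i)(e_0-e_i)^T$ on those fixed at one, and the slack multiplier $\nu$. Your relative-interior point $Y(\bar x)$ is also exactly the paper's $\bar Y=VRV^T$.

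The bookkeeping you defer in Step~3 is just the identity $\langle W,Y\rangle+\nu^Ts=h^Tz=y^Tg=0$. This holds on the whole affine constraint set, not only on $S$, which is what membership in \eqref{eq:shor_fr_auxiliary} requires. The paper makes it explicit by taking $\lambda=\mathbf{1}^T\rho$ and $r_i=h_i+h_{n+i}$, using $(A^T\mu+C^T\nu)_i=h_i-h_{n+i}$. Nonzeroness of $(W,\nu)$, which you did not state, follows from $h\neq0$.
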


\begin{proof}
By \eqref{eq:shor_projection_P}, $\proj_x(S)=P$.
\Cref{lem:arrow_positive_definite_lift} shows that $Y$ can be positive definite exactly when
$0<x<\mathbf{1}$, while the slack vector lies in $\mathbb{R}^q_{++}$ exactly
when $Cx<d$.  Hence $S$ satisfies Slater's condition if and
only if the first case in the theorem holds.

It remains to show that one facial-reduction step suffices when Slater's
condition fails.  Consider the following affine slice of the nonnegative
orthant:
\begin{equation}\label{eq:shor_nonnegative_orthant_slice}
\mathcal U
:=
\left\{
(u,v,s)\in\mathbb{R}^{2n+q}_+
\ \middle|\
u+v=\mathbf{1},\ Au=b,\ Cu+s=d
\right\}.
\end{equation}
This set is nonempty because it contains
$(x,\mathbf{1}-x,d-Cx)$ for every $x\in P$.  Write the equality system in
\eqref{eq:shor_nonnegative_orthant_slice} as $Mz=g$.  Since Slater's
condition fails, $\mathcal U\cap\mathbb{R}^{2n+q}_{++}=\emptyset$.  Hence
\Cref{lem:nonnegative_orthant_minimal_face} gives $\mathcal Z\neq\emptyset$ and
multipliers $(\delta,\mu,\nu)\in
\mathbb{R}^n\times\mathbb{R}^p\times\mathbb{R}^q$ such that
\begin{equation}\label{eq:shor_multiplier_identities}
\begin{gathered}
M^T(\delta,\mu,\nu)=(\alpha,\beta,\gamma)\geq0,
\qquad
\mathbf{1}^T\delta+b^T\mu+d^T\nu=0,\\
\alpha=\delta+A^T\mu+C^T\nu,
\qquad \beta=\delta,
\qquad \gamma=\nu.
\end{gathered}
\end{equation}
Moreover, $(\alpha,\beta,\gamma)$ is nonzero, and its positive entries
correspond exactly to components of
$(x,\mathbf{1}-x,s)$ that vanish throughout
\eqref{eq:shor_nonnegative_orthant_slice}.

Define
\begin{equation}\label{eq:shor_one_step_exposing_vector}
W
:=
\sum_{i=1}^n\alpha_i e_ie_i^T
+\sum_{i=1}^n\beta_i(e_0-e_i)(e_0-e_i)^T
\succeq0.
\end{equation}
For every $(Y,s)$ satisfying the affine equations in
\eqref{eq:shor_linear_binary_sdp}, the normalization and arrow
constraints give
\begin{align*}
\langle W,Y\rangle+\gamma^Ts
&=
\sum_{i=1}^n\alpha_iX_{ii}
+\sum_{i=1}^n\beta_i(1-2x_i+X_{ii})
+\gamma^Ts\\
&=\alpha^Tx+\beta^T(\mathbf{1}-x)+\gamma^Ts\\
&=0.
\end{align*}
If $\gamma\neq0$, then $(W,\gamma)\neq0$.  If $\gamma=0$, then
$(\alpha,\beta)\neq0$, and \eqref{eq:shor_one_step_exposing_vector}
is a nonzero nonnegative sum of rank-one PSD matrices.  Thus $(W,\gamma)$ is
nonzero.  By \eqref{eq:shor_multiplier_identities},
$A^T\mu+C^T\gamma=\alpha-\beta$, and hence
\begin{align*}
W
&=(\mathbf{1}^T\beta)E_{00}
+\sum_{i=1}^n(\alpha_i+\beta_i)(E_{ii}-E_{0i})
+\sum_{i=1}^n(A^T\mu+C^T\gamma)_iE_{0i},\\
0&=\mathbf{1}^T\beta+b^T\mu+d^T\gamma.
\end{align*}
Therefore, with $\lambda=\mathbf{1}^T\beta$, $r=\alpha+\beta$, and
$\nu=\gamma$, the pair $(W,\gamma)$ satisfies
\eqref{eq:shor_fr_auxiliary}.  Thus it is a nonzero exposing vector for
the product cone.

We finally verify that the face exposed by $(W,\gamma)$ is the minimal face.
Let
\begin{align*}
I_0&:=\{i\mid x_i=0\ \text{for every }x\in P\},\\
I_1&:=\{i\mid x_i=1\ \text{for every }x\in P\},\\
K_0&:=\{k\mid d_k-(Cx)_k=0\ \text{for every }x\in P\}.
\end{align*}
The support property in \Cref{lem:nonnegative_orthant_minimal_face} gives
$\alpha_i>0$ exactly on $I_0$, $\beta_i>0$ exactly on $I_1$, and
$\gamma_k>0$ exactly on $K_0$.  Hence the face of
$\mathbb{S}^{n+1}_+$ exposed by $W$ is
\[
F
=
\{Y\succeq0\mid Ye_i=0\ (i\in I_0),\quad
Y(e_0-e_i)=0\ (i\in I_1)\}.
\]
The face of the nonnegative orthant exposed by $\gamma$ is
\[
G:=\{s\in\mathbb{R}^q_+\mid \gamma^Ts=0\}.
\]
Therefore the face of $\mathbb{S}^{n+1}_+\times\mathbb{R}^q_+$ exposed by
$(W,\gamma)$ is $F\times G$.

Choose $(\bar x,\mathbf{1}-\bar x,\bar s)\in\ri(\mathcal U)$, and let
$J:=\{1,\ldots,n\}\setminus(I_0\cup I_1)$.  Then
$0<\bar x_j<1$ for every $j\in J$, and $\bar s_k>0$ for every
$k\notin K_0$.  Set
\[
v_0:=e_0+\sum_{i\in I_1}e_i,
\qquad
V:=\begin{pmatrix}v_0&(e_j)_{j\in J}\end{pmatrix},
\]
whose columns form a basis for the kernel of $W$,
and define
\[
R
:=
\begin{pmatrix}
1&\bar x_J^T\\
\bar x_J&\bar x_J\bar x_J^T+
\Diag\bigl((\bar x_j(1-\bar x_j))_{j\in J}\bigr)
\end{pmatrix}\succ0.
\]
Then $\bar Y:=VRV^T$ belongs to $\ri(F)$, satisfies the arrow constraints,
and its $x$-block is $\bar x$.  Moreover, $R_{00}=1$ gives
$\bar Y_{00}=1$, and membership of
$(\bar x,\mathbf{1}-\bar x,\bar s)$ in $\mathcal U$ gives
$A\bar x=b$ and $C\bar x+\bar s=d$.  Thus
$(\bar Y,\bar s)\in S$.  Since
$\bar s\in\ri(G)$, we have
\[
(\bar Y,\bar s)\in S\cap\ri(F\times G).
\]
Consequently, $F\times G$ is the minimal face of the product cone containing
$S$.  Since Slater's condition fails, the singularity degree
is at least one, while the single exposing vector $(W,\gamma)$ reaches the
minimal face.  Therefore $\sd(S)=1$.
\end{proof}

The bound in \Cref{thm:shor_linear_sd_bound} is attained.  For example,
the binary set
\[
\left\{x\in\{0,1\}^n\ \middle|\ \sum_{i=1}^n x_i=0\right\}
=\{0\}
\]
has a Shor relaxation that fails Slater's condition, and
$\sum_{i=1}^nE_{ii}$ is an exposing vector whose exposed face is the minimal
face.  Therefore its singularity degree is one, so the bound in
\Cref{thm:shor_linear_sd_bound} is tight.

The theorem has two useful consequences.  First, Slater's condition can be
restored by restricting the product cone to its minimal face, without solving
an auxiliary SDP.  A relative-interior point
$(\bar x,\mathbf{1}-\bar x,\bar s)$ of the polyhedral set $\mathcal U$ in
\eqref{eq:shor_nonnegative_orthant_slice} can be found by linear programming
\cite{mehdiloo2021finding}.  Its zero components identify the variables fixed
at zero or one and the inequalities that are always tight, and hence determine
the minimal face constructed in the proof of
\Cref{thm:shor_linear_sd_bound}.  The explicit matrix construction in that
proof then gives a feasible point in the relative interior of this face.
Thus, the minimal face and a relative-interior feasible point can be recovered
by solving an LP and applying explicit linear algebra.

Second, the bound $\sd(S)\leq1$ limits the severity of the degeneracy even if
facial reduction is not applied.  Sturm's H\"older error bound has exponent
$2^{-\sd(S)}$, so the exponent for the Shor relaxation considered here is at
least $1/2$.  Although $\sd(S)=1$ may still cause numerical difficulty, the
bound rules out the deeper degeneracy and progressively weaker error bounds
associated with long facial-reduction sequences
\cite{sturm2000error,sremac2021error}.

\section{High singularity degree in Shor relaxations of quadratically
constrained binary sets}
\label{sec:binary_high_singularity_degree}

\subsection{Sturm's example and the arrow constraints}

Section~\ref{sec:linear_binary_shor_degree} shows that the Shor
relaxation of every linearly constrained binary set has singularity
degree at most one.  We now show that this bound results from restricting the
defining constraints to be linear, rather than from the arrow constraints
alone.  We begin with a direct binary transcription of Sturm's example.  Its
interaction with the arrow constraints collapses the entire
facial-reduction sequence into one step, motivating the transformed
construction in the following subsection.

Recall Sturm's system \eqref{eq:sturm_example}.  Mimicking its chain with
binary variables gives
\begin{equation}\label{eq:boolean_sturm_attempt}
x_i^2-x_{i+1}=0,\quad i=1,\ldots,n-1,
\qquad x_n^2=0,
\qquad x\in\{0,1\}^n.
\end{equation}
These equations again force $x=0$, but their Shor relaxation does not inherit
the high singularity degree of \eqref{eq:sturm_example}.  Using the
normalization and arrow matrices from \eqref{eq:arrow_matrix_defs}, their
Shor linearizations are $\langle B_i,Y\rangle=0$, where
\begin{equation}\label{eq:sturm_matrix_defs}
B_i:=E_{ii}-E_{0,i+1}
\quad (i=1,\ldots,n-1),
\qquad B_n:=E_{nn}.
\end{equation}
The resulting Shor relaxation is
\begin{equation}\label{eq:binary_sturm_relaxation}
\left\{
Y\in\mathbb{S}^{n+1}_+
\ \middle|\
\langle A_0,Y\rangle=1,\quad
\langle A_i,Y\rangle=0\ (i=1,\ldots,n),\quad
\langle B_i,Y\rangle=0\ (i=1,\ldots,n)
\right\}.
\end{equation}

The arrow matrices and the matrices in \eqref{eq:sturm_matrix_defs} satisfy
\begin{equation}\label{eq:binary_sturm_diag_combo}
\sum_{i=1}^nE_{ii}
=
nB_n+\sum_{i=1}^{n-1}iB_i
-\sum_{i=2}^n(i-1)A_i .
\end{equation}
Consequently,
\[
W:=\operatorname{Diag}(0,1,\ldots,1)=\sum_{i=1}^nE_{ii}
\]
is a nonzero positive semidefinite matrix in the span of the
zero-right-hand-side constraint matrices, and hence is a valid exposing
matrix.  The face it exposes is
\[
F_1:=\mathbb{S}^{n+1}_+\cap W^\perp
=\{\alpha e_0e_0^T\mid \alpha\geq0\}.
\]
Indeed, a zero diagonal entry in a positive semidefinite matrix forces the
corresponding row and column to vanish.  Since $e_0e_0^T$ is feasible,
$F_1$ is the minimal face containing the feasible set; the normalization
constraint further shows that the feasible set is the singleton
$\{e_0e_0^T\}$.  Thus, one facial-reduction step reaches the minimal face, and
\eqref{eq:binary_sturm_relaxation} has singularity degree one for every
$n\geq1$.  Identity \eqref{eq:binary_sturm_diag_combo} shows how the arrow
constraints combine with the linearized quadratic equations to expose all
nonconstant coordinates at once.

\subsection{A construction with singularity degree
\texorpdfstring{$n$}{n}}
\label{sec:binary_shor_high_degree}

We now construct, for every $n\geq1$, a binary set defined by quadratic
equalities whose Shor relaxation has singularity degree $n$.  The construction
imposes Sturm's chain on nonsingular linear forms of the
binary variables.  An ordinary Vandermonde transformation then produces a
Hankel structure that allows the exposing matrices to be characterized
explicitly.

For $s=1,\ldots,n$ and $t>0$, set
\[
c_s(t):=(t,t^2,\ldots,t^s)^T.
\]
Fix distinct positive numbers $t_1,\ldots,t_n$, and define
\begin{equation}\label{eq:binary_high_degree_C}
C_{ij}:=t_i^j,
\qquad i,j=1,\ldots,n.
\end{equation}
Thus, row $i$ of $C$ is $c_n(t_i)^T$.  Factoring $t_i$ from each row yields
the standard Vandermonde matrix with row $i$ equal to
$(1,t_i,\ldots,t_i^{n-1})$.  Since the $t_i$ are distinct, $C$ is
nonsingular.  Let $D:=C^{-1}$, let
$d_i^T$ be row $i$ of $D$, and define
\[
\ell_i(x):=d_i^Tx,
\qquad i=1,\ldots,n.
\]
Using these linear forms, define a quadratically constrained binary set by the
following quadratic equalities in the original variables $x$:
\begin{equation}\label{eq:binary_high_degree_set}
\mathcal X_n
:=
\left\{
x\in\{0,1\}^n
\ \middle|\
\ell_i(x)^2-\ell_{i+1}(x)=0\ (i=1,\ldots,n-1),\ 
\ell_n(x)^2=0
\right\}.
\end{equation}
The last equation gives $\ell_n(x)=0$, and the remaining equations then give
$\ell_{n-1}(x)=\cdots=\ell_1(x)=0$.  Since $D$ is nonsingular,
$\mathcal X_n=\{0\}$.

The quadratic description in \eqref{eq:binary_high_degree_set} is essential.
The same binary set can be described by the linear equations $x=0$, whose
Shor relaxation has singularity degree one by
\Cref{thm:shor_linear_sd_bound}.  We now show that the displayed quadratic
description produces singularity degree $n$.

Writing $\tilde d_i:=\binom{0}{d_i}$, the Shor linearizations of the quadratic
equations are $\langle G_i,Y\rangle=0$, where
\[
G_i:=\tilde d_i\tilde d_i^T
-\frac12(e_0\tilde d_{i+1}^T+\tilde d_{i+1}e_0^T)
\quad(i=1,\ldots,n-1),
\qquad
G_n:=\tilde d_n\tilde d_n^T.
\]
The Shor relaxation associated with this quadratic description is
\begin{equation}\label{eq:binary_high_degree_shor}
\mathcal S_n
:=
\left\{
Y\in\mathbb S^{n+1}_+
\ \middle|\
\langle A_0,Y\rangle=1,\ 
\langle A_k,Y\rangle=0\ (k=1,\ldots,n),\ 
\langle G_i,Y\rangle=0\ (i=1,\ldots,n)
\right\}.
\end{equation}

To analyze $\mathcal S_n$, we introduce a family of lower-dimensional
spectrahedra.  For $s=1,\ldots,n$ and $k=1,\ldots,n$, define
$\widetilde A_k^{(s)}\in\mathbb S^{s+1}$ by
\[
\widetilde A_k^{(s)}
:=
\begin{pmatrix}
0&-\frac12c_s(t_k)^T\\[1mm]
-\frac12c_s(t_k)&c_s(t_k)c_s(t_k)^T
\end{pmatrix},
\]
and, using the symmetric basis matrices $E_{ij}\in\mathbb S^{s+1}$, define
$B_i^{(s)}\in\mathbb S^{s+1}$ by
\[
B_i^{(s)}:=E_{ii}-E_{0,i+1}
\quad(i=1,\ldots,s-1),
\qquad
B_s^{(s)}:=E_{ss}.
\]
Set
\begin{equation}\label{eq:rotated_spectrahedron_family}
\mathcal T_s
:=
\left\{
Z\in\mathbb S^{s+1}_+
\ \middle|\
Z_{00}=1,\ 
\langle\widetilde A_k^{(s)},Z\rangle=0\ (k=1,\ldots,n),\ 
\langle B_i^{(s)},Z\rangle=0\ (i=1,\ldots,s)
\right\}.
\end{equation}

\begin{thm}\label{thm:binary_shor_high_degree}
For every $n\geq1$, the set $\mathcal X_n$ in
\eqref{eq:binary_high_degree_set} is the singleton $\{0\}$, and its Shor
relaxation $\mathcal S_n$ has singularity degree $n$.
\end{thm}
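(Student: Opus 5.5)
The plan is to reduce $\mathcal S_n$ by a congruence to the rotated spectrahedron $\mathcal T_n$ in \eqref{eq:rotated_spectrahedron_family}, and then prove $\sd(\mathcal T_s)=s$ by induction on $s$. The first claim of the theorem, $\mathcal X_n=\{0\}$, was already shown after \eqref{eq:binary_high_degree_set}. Let $T:=\left(\begin{smallmatrix}1&0\\0&D\end{smallmatrix}\right)$, which is nonsingular, and set $Z:=TYT^T$. Then $Z_{00}=Y_{00}$, the $z$-block of $Z$ is $Dx$, and $\langle G_i,Y\rangle=\langle B_i^{(n)},Z\rangle$, because $\tilde d_i^TY\tilde d_j=Z_{ij}$. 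For the arrow constraints, write $x=Cz$ and $X=CZ_{\mathrm{blk}}C^T$. Row $k$ of $C$ is $c_n(t_k)^T$, so $X_{kk}-x_k=\langle \widetilde A_k^{(n)},Z\rangle$. Hence $Y\mapsto TYT^T$ is a linear automorphism of $\mathbb S^{n+1}_+$ mapping $\mathcal S_n$ onto $\mathcal T_n$. It maps faces to faces and exposing vectors to exposing vectors (via $W\mapsto T^{-T}WT^{-1}$), so $\sd(\mathcal S_n)=\sd(\mathcal T_n)$.

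The key lemma is the following. Let $W$ be any exposing matrix for $\mathcal T_s$, that is, $W=\lambda E_{00}+\sum_k a_k\widetilde A_k^{(s)}+\sum_i b_iB_i^{(s)}\succeq0$ with $\lambda=0$ because the right-hand side is zero. We claim $W=\alpha E_{ss}$ for some $\alpha\ge0$. To prove it, put $h_m:=\sum_k a_kt_k^m$ and $b_0:=0$.
\begin{itemize}
\item Every constraint matrix has zero $(0,0)$ entry, so $W_{00}=0$. Since $W\succeq0$, row $0$ of $W$ vanishes. This gives $h_1=0$ and $b_{j-1}=-h_j$ for $j=2,\ldots,s$.
\item The lower-right block of $W$ is therefore the Hankel matrix $(h_{i+j})_{i,j=1}^s$ plus $\Diag(-h_2,\ldots,-h_s,b_s)$.
\item Induct on $i=1,\ldots,s-1$. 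Suppose rows $1,\ldots,i-1$ are zero, which forces $h_m=0$ for all $m\le i-1+s$. Then the diagonal entry $W_{ii}=h_{2i}-h_{i+1}$ vanishes, because $2i\le i-1+s$ and $i+1\le i-1+s$ when $i\le s-1$. Positive semidefiniteness then forces row $i$ to vanish.
\item After $s-1$ such steps, only the entry $W_{ss}=h_{2s}+b_s\ge0$ survives.
\end{itemize}
I expect this Hankel zero-propagation to be the main technical step. The index bookkeeping (checking $2i\le i-1+s$) is where care is needed; the Vandermonde structure of $C$ is used only to make $T$ nonsingular.

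From the lemma, every facial-reduction step applied to $\mathcal T_s$ is forced. Any nonzero exposing matrix exposes exactly the face $\{Z\succeq0\mid Z_{s\cdot}=0\}$, which is isomorphic to $\mathbb S^{s}_+$. Restricted to this face, $\widetilde A_k^{(s)}$ becomes $\widetilde A_k^{(s-1)}$, $B_i^{(s)}$ becomes $B_i^{(s-1)}$ for $i\le s-2$, and $B_{s-1}^{(s)}=E_{s-1,s-1}-E_{0,s}$ becomes $E_{s-1,s-1}=B_{s-1}^{(s-1)}$ because $Z_{0s}=0$. Thus the reduced system is exactly $\mathcal T_{s-1}$, and $\sd(\mathcal T_s)=1+\sd(\mathcal T_{s-1})$ for $s\ge2$.

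For the base case $s=1$, the set $\mathcal T_1$ forces $Z_{11}=0$, so $\mathcal T_1=\{e_0e_0^T\}$. Slater's condition fails, and $E_{11}$ exposes the face $\{\alpha e_0e_0^T\}$, which contains the feasible point $e_0e_0^T$ in its relative interior, so $\sd(\mathcal T_1)=1$. For every $s$, each stage fails Slater's condition because the $(s,s)$ entry vanishes, and the lemma shows no step can remove more than one coordinate. The minimal-face count therefore cannot be shortened by any alternative sequence, and we obtain $\sd(\mathcal S_n)=\sd(\mathcal T_n)=n$.
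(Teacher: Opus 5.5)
Your proposal is correct and follows essentially the same route as the paper: a congruence identifying $\mathcal S_n$ with $\mathcal T_n$ (you use the inverse map built from $D$ instead of $C$), Hankel zero-propagation showing that every first-step exposing matrix is a multiple of $E_{ss}$, and induction by restricting to $\mathcal T_{s-1}$ (you start at $s=1$ rather than at $\mathcal T_0$). One small correction to your bookkeeping: $h_2$ is never forced to vanish, because it cancels against $b_1$, so your invariant should read $h_m=0$ for $3\le m\le i-1+s$, with $W_{11}=h_2-h_2=0$ holding identically; this matches the paper's $a_3=\cdots=a_{s+r-1}=0$ and leaves the rest of your argument unaffected.
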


\begin{proof}
We have already shown that $\mathcal X_n=\{0\}$.  We first relate
$\mathcal S_n$ to $\mathcal T_n$.  Since $D=C^{-1}$, $C^Td_i$ is the $i$th
standard unit vector of $\mathbb R^n$.  Hence, for
\[
R:=\begin{pmatrix}1&0\\0&C\end{pmatrix},
\]
a direct block calculation, detailed in
\Cref{app:congruence_derivation}, gives
\[
R^TG_iR=B_i^{(n)}\quad(i=1,\ldots,n),
\qquad
R^TA_kR=\widetilde A_k^{(n)}\quad(k=1,\ldots,n),
\qquad
R^TA_0R=A_0.
\]
Consequently, $Z\mapsto RZR^T$ maps $\mathcal T_n$ bijectively onto
$\mathcal S_n$.  Since $R$ is nonsingular, this congruence preserves
facial-reduction sequences and singularity degree.  It therefore remains to
prove that $\sd(\mathcal T_n)=n$.

We prove the stronger statement
\[
\sd(\mathcal T_s)=s,
\qquad s=1,\ldots,n.
\]
Fix $s$.  In the first facial-reduction auxiliary system for $\mathcal T_s$,
the multiplier of the normalization constraint is zero because it is the only
constraint with a nonzero right-hand side.  Thus, every first-step exposing
matrix has the form
\begin{equation}\label{eq:consecutive_exposing_matrix}
W
=
\sum_{i=1}^s\lambda_iB_i^{(s)}
+\sum_{k=1}^n\mu_k\widetilde A_k^{(s)}
\succeq0.
\end{equation}
Define
\[
a_r:=\sum_{k=1}^n\mu_k t_k^r,
\qquad r=1,\ldots,2s.
\]
Here $r$ indexes the exponent, and each $a_r$ is a sum over
$k=1,\ldots,n$.  Since component $j$ of $c_s(t_k)$ is $t_k^j$, the
contribution from the matrices $\widetilde A_k^{(s)}$ is
\begin{equation}\label{eq:consecutive_moment_block}
\sum_{k=1}^n\mu_k\widetilde A_k^{(s)}
=
\begin{pmatrix}
0&-\frac12(a_1,\ldots,a_s)\\[1mm]
-\frac12(a_1,\ldots,a_s)^T&(a_{i+j})_{i,j=1}^s
\end{pmatrix}.
\end{equation}
Thus, row $0$ involves only $a_1,\ldots,a_s$, whereas the lower-right
block involves moments through $a_{2s}$.
With the convention $\lambda_0:=0$, the contribution from the matrices
$B_i^{(s)}$ is
\begin{equation}\label{eq:consecutive_B_block}
\sum_{i=1}^s\lambda_iB_i^{(s)}
=
\begin{pmatrix}
0&-\frac12(\lambda_0,\lambda_1,\ldots,\lambda_{s-1})\\[1mm]
-\frac12(\lambda_0,\lambda_1,\ldots,\lambda_{s-1})^T
&\Diag(\lambda_1,\ldots,\lambda_s)
\end{pmatrix}.
\end{equation}
Every matrix in \eqref{eq:consecutive_exposing_matrix} has zero $(0,0)$
entry.  Positive semidefiniteness therefore forces row and column $0$ of $W$
to vanish.  Combining \eqref{eq:consecutive_moment_block} and
\eqref{eq:consecutive_B_block}, the entries of row $0$ give
\begin{equation}\label{eq:consecutive_zeroth_row}
a_1=0,
\qquad
\lambda_i=-a_{i+1}\quad(i=1,\ldots,s-1).
\end{equation}
In particular, the lower-right block in
\eqref{eq:consecutive_moment_block} is Hankel, with $(i,j)$ entry
\begin{equation}\label{eq:consecutive_hankel_entries}
a_{i+j},
\qquad i,j=1,\ldots,s.
\end{equation}

We claim that every row and column of $W$, except possibly the one indexed by
$s$, is zero.  The claim is immediate for $s=1$.  Suppose $s\geq2$.  By
\eqref{eq:consecutive_zeroth_row} and
\eqref{eq:consecutive_hankel_entries},
\[
W_{11}=\lambda_1+a_2=-a_2+a_2=0.
\]
Since $W\succeq0$, row and column $1$ must vanish.  Their entries indexed by
$j=2,\ldots,s$ satisfy $W_{1j}=a_{1+j}$, and hence
\[
a_3=a_4=\cdots=a_{s+1}=0.
\]
For the induction step, let $2\leq r\leq s-1$ and suppose that rows and columns
$1,\ldots,r-1$ vanish and
\[
a_3=a_4=\cdots=a_{s+r-1}=0.
\]
Both $r+1$ and $2r$ lie between $3$ and $s+r-1$.  Therefore
\[
W_{rr}
=\lambda_r+a_{2r}
=-a_{r+1}+a_{2r}
=0.
\]
Positive semidefiniteness again forces row and column $r$ to vanish.  In
particular, $W_{rs}=a_{s+r}=0$, which extends the zero sequence by one term.
Induction therefore gives $a_3=\cdots=a_{2s-1}=0$ and shows that rows and
columns $1,\ldots,s-1$ vanish.  Hence
$W=\gamma E_{ss}$ for some $\gamma\geq0$.  Since
$E_{ss}=B_s^{(s)}$, every positive multiple of $E_{ss}$ is a valid exposing
matrix.

Therefore every nonzero first-step exposing matrix for $\mathcal T_s$ exposes
the face on which row and column $s$ vanish.  For $s\geq2$, deleting that row
and column truncates $c_s(t)$ to $c_{s-1}(t)$ and transforms the remaining
constraints exactly into those defining $\mathcal T_{s-1}$.
More explicitly, $B_i^{(s)}$ restricts to
$B_i^{(s-1)}$ for $i=1,\ldots,s-1$, with the $E_{0s}$ term in
$B_{s-1}^{(s)}$ disappearing, while $B_s^{(s)}$ restricts to zero.
Thus, the
reduced spectrahedron is $\mathcal T_{s-1}$.  For $s=1$,
the same reduction leaves $\mathcal T_0:=\{1\}$, which is strictly feasible.
Hence $\sd(\mathcal T_0)=0$, and induction gives
\[
\sd(\mathcal T_s)
=1+\sd(\mathcal T_{s-1})
=s,
\qquad s=1,\ldots,n.
\]
Since $\mathcal S_n$ and $\mathcal T_n$ are congruent,
$\sd(\mathcal S_n)=n$.
\end{proof}

The direct formulation \eqref{eq:boolean_sturm_attempt} and the rotated
formulation \eqref{eq:binary_high_degree_set} both define the singleton
$\{0\}$, but their Shor relaxations have different singularity degrees.  In
the direct formulation, the arrow and quadratic constraint matrices combine
to expose all nonconstant coordinates in one step.  In the rotated
formulation, the argument above shows that every step can expose only the
last remaining coordinate.  This is precisely the
formulation dependence underlying the contrast between singularity degree one
and singularity degree $n$.

\section{Conclusion}

We have established a sharp distinction between Shor relaxations of linearly
and quadratically constrained binary sets.  For every nonempty linearly
constrained binary set, the Shor relaxation has singularity degree
at most one.  Its singularity degree is zero precisely when the LP relaxation
contains a point in the open cube that strictly satisfies every inequality;
otherwise, a single facial-reduction step reaches the minimal face.  In
contrast, in every dimension we constructed a binary set defined by quadratic
equalities whose Shor relaxation has a positive semidefinite matrix variable
of order one greater than the dimension and singularity degree equal to the
dimension.

From the perspective of facial regularity, the result for linearly constrained
binary sets is reassuring.  Any failure of Slater's condition can be resolved
in a single facial-reduction step, and the minimal face can be identified by
solving an LP and applying explicit linear algebra, rather than by solving an
auxiliary SDP.  Moreover, even if facial reduction is not performed, the
singularity-degree bound of one rules out the deeper degeneracy and
progressively weaker error bounds associated with long facial-reduction
sequences.  Thus, the result
provides a favorable regularity guarantee for the numerical use of Shor
relaxations.  No such uniform guarantee holds for quadratic descriptions:
their singularity degree can grow linearly with the order of the lifted
matrix.
\appendix

\section{Details of the congruence calculation}
\label{app:congruence_derivation}

We verify the congruence identities used in the proof of
\Cref{thm:binary_shor_high_degree}.  Let $\hat e_i$ denote the $i$th standard
unit vector of $\mathbb R^n$.  Since $D=C^{-1}$ and $d_i^T$ is row $i$ of
$D$,
\[
C^Td_i=\hat e_i,
\qquad i=1,\ldots,n.
\]
Therefore, for
$R=\left(\begin{smallmatrix}1&0\\0&C\end{smallmatrix}\right)$ and
$\tilde d_i=\binom{0}{d_i}$,
\begin{equation}\label{eq:appendix_transformed_vectors}
R^T\tilde d_i=\binom{0}{C^Td_i}=\binom{0}{\hat e_i}=e_i,
\qquad R^Te_0=e_0.
\end{equation}

For $i=1,\ldots,n-1$, substituting
\eqref{eq:appendix_transformed_vectors} into the definition of $G_i$ gives
\begin{align*}
R^TG_iR
&=(R^T\tilde d_i)(R^T\tilde d_i)^T
-\frac12\bigl((R^Te_0)(R^T\tilde d_{i+1})^T
+(R^T\tilde d_{i+1})(R^Te_0)^T\bigr)\\
&=e_ie_i^T-\frac12(e_0e_{i+1}^T+e_{i+1}e_0^T)\\
&=E_{ii}-E_{0,i+1}=B_i^{(n)}.
\end{align*}
For the terminal equation,
\[
R^TG_nR=(R^T\tilde d_n)(R^T\tilde d_n)^T
=e_ne_n^T=E_{nn}=B_n^{(n)}.
\]

It remains to transform the arrow and normalization matrices.  For
$k=1,\ldots,n$, the block form of $A_k=E_{kk}-E_{0k}$ is
\[
A_k=
\begin{pmatrix}
0&-\frac12\hat e_k^T\\[1mm]
-\frac12\hat e_k&\hat e_k\hat e_k^T
\end{pmatrix},
\]
and hence
\begin{align*}
R^TA_kR
&=
\begin{pmatrix}
0&-\frac12\hat e_k^TC\\[1mm]
-\frac12C^T\hat e_k&C^T\hat e_k\hat e_k^TC
\end{pmatrix}\\
&=
\begin{pmatrix}
0&-\frac12c_n(t_k)^T\\[1mm]
-\frac12c_n(t_k)&c_n(t_k)c_n(t_k)^T
\end{pmatrix}
=\widetilde A_k^{(n)},
\end{align*}
because $\hat e_k^TC=c_n(t_k)^T$ is row $k$ of $C$.  Finally,
\[
R^TA_0R=R^TE_{00}R=E_{00}=A_0.
\]
These calculations establish all three congruence identities used in the
proof.

\bibliographystyle{siam}
\bibliography{mybib}
\addcontentsline{toc}{section}{Bibliography}

\end{document}